\documentclass[12pt,reqno]{elsarticle}

\usepackage{amsfonts,color,amsmath,amssymb,fancyhdr}
\usepackage{amsthm}
\usepackage{tikz}
\usepackage{graphicx}
\usepackage{subfigure}
\usepackage{color}
\usepackage{url}

\textheight 22cm
\textwidth16cm
\hoffset-1.2truecm
\voffset-2truecm

\def\Box{\vcenter{\vbox{\hrule\hbox{\vrule
     \vbox to 8.8pt{\hbox to 10pt{}\vfill}\vrule}\hrule}}}

\newcommand{\tr}{\textup{Tr}}

\newcommand{\gu}{{n}}
\newcommand{\q}{{p^e}}
\newcommand{\ra}{\rangle}

\newcommand{\la}{\langle}
\newcommand{\F}{{\mathbb F}}
\newcommand{\Z}{{\mathbb Z}}
\newcommand{\C}{{\mathbb C}}

\newcommand{\cM}{{\mathcal M}}

\newcommand{\PG}{\textup{PG}}

\newcommand{\Cay}{\textup{Cay}}
\newcommand\W{\textup{W}}
\newcommand\Q{\textup{Q}}
\newtheorem{thm}{Theorem}

\newtheorem{lemma}[thm]{Lemma}

\newtheorem{example}[thm]{Example}
\numberwithin{equation}{section}
\numberwithin{thm}{section}

\theoremstyle{definition}

\begin{document}
\newcommand{\stopthm}{\begin{flushright}
\(\box \;\;\;\;\;\;\;\;\;\; \)
\end{flushright}}

\newcommand{\symfont}{\fam \mathfam}

\title{On $m$-ovoids of Symplectic Polar Spaces}
\author[add1]{Tao Feng} \ead{tfeng@zju.edu.cn}
\author[add1]{Ye Wang}\ead{ye$\_$wang@zju.edu.cn}
\author[add2]{Qing Xiang \corref{cor1}}\ead{qxiang@udel.edu}
\cortext[cor1]{Corresponding author}
\address[add1]{School of Mathematical Sciences, Zhejiang University, 38 Zheda Road, Hangzhou 310027, Zhejiang P.R. China}
\address[add2]{Department of Mathematical Sciences, University of Delaware, Newark, DE 19716, USA
}

\begin{abstract}

In this paper, we develop a new method for constructing $m$-ovoids in the symplectic polar space $\W(2r-1,\q)$ from some strongly regular Cayley graphs in \cite{Brouwer1999Journal}. Using this method, we obtain many new $m$-ovoids which can not be derived by field reduction.

 \vspace*{3mm}

\noindent \textbf{Keyword:} Cyclotomic class, intriguing set, $m$-ovoid, strongly regular graph, symplectic polar space.\\[1mm]
\noindent \textbf{Mathematics Subject Classification:} 05E30, 51E05, 51E30
\end{abstract}

\maketitle

\section{Introduction}

Let  $e\geq 1, r\geq 2$ be integers, $p$ be a prime, and $\F_\q$ be the finite field of size $\q$. Let $V$ be a $2r$-dimensional vector space over $\F_\q$ and $f$ be a non-degenerate alternating form defined on $V$. The symplectic polar space $\W(2r-1,\q)$ associated with the form $f$ is the geometry consisting of subspaces of $\PG(V)$ induced by the totally isotropic subspaces with respect to $f$. The symplectic polar space $\W(2r-1,\q)$ contains totally isotropic points, lines, planes, etc. Note that since $f$ is alternating, every point of $\PG(V)$ is totally isotropic. Therefore the set of points of $\W(2r-1,\q)$ coincides with the set of points of $\PG(V)$. The (totally isotropic) subspaces of maximum dimension are called {\it maximals} (or {\it generators}) of $\W(2r-1,\q)$. The {\it rank} of $\W(2r-1,\q)$ is the vector space dimension of its maximals, namely $r$.

In this paper, we are concerned with $m$-ovoids in $\W(2r-1,\q)$. An {\it $m$-ovoid} in $\W(2r-1,\q)$  is a set ${\mathcal M}$ of points such that every maximal of $\W(2r-1,\q)$ meets ${\mathcal M}$ in exactly $m$ points. A 1-ovoid in $\W(2r-1,\q)$ is simply called an {\it ovoid}. Ovoids in $\W(2r-1,\q)$ (and more generally in any classical polar space) were first defined by Thas \cite{Thas1981} in 1981. The existence problem for ovoids in $\W(2r-1,\q)$ is completely solved: $\W(3,\q)$ has an ovoid if and only if $p=2$; and $\W(2r-1,\q)$, $r>2$, has no ovoids. The concept of an $m$-ovoid was first defined by Thas \cite{Thas1989Interesting} for generalized quadrangles, and then generalized to that in classical polar spaces by Shult and Thas \cite{Shult1994msystem}. There are some closely related objects, called $i$-tight sets, in $\W(2r-1,\q)$. We will not study $i$-tight sets in this paper, but simply mention that $m$-ovoids and $i$-tight sets of $\W(2r-1,\q)$ can be unified under the umbrella of intriguing sets  \cite{Bamberg2007Tight} of $\W(2r-1,\q)$.

Intriguing sets (in particular, $m$-ovoids) in classical polar spaces have close connections with other geometric and combinatorial structures such as strongly regular graphs and projective two-weight codes, cf. 
\cite{ Bamberg2007Tight, Bamberg2018A, Bamberg2009Tight, Calderbank1986The}. For example, $m$-ovoids in $\W(2r-1,\q)$ turn out to be projective two-intersection sets in $\PG(2r-1,\q)$ and thus give rise to  strongly regular graphs, cf. \cite{Bamberg2007Tight}. There is also a significant relation between projective two-intersection sets and two-weight codes, cf. \cite{Calderbank1986The}. A construction of $m$-ovoids in $\Q^{-}(5,\q)$ via strongly regular Cayley graphs was given in \cite{Bamberg2018A}.

The main problem concerning $m$-ovoids in $\W(2r-1,\q)$ is: For which $m\geq 1$ does there exist an $m$-ovoid in $\W(2r-1,\q)$? As we mentioned above, when $m=1$, this problem is completely solved. In sharp contrast,  the existence problem for $m$-ovoids with $m\geq 2$ is wide open. We give a brief summary of known results here. We start with $\W(3,\q)$: When $p$ is odd,  there are no ovoids in $\W(3,\q)$, cf. \cite{Payne1984Finite}; but there is a partition of $\W(3,\q)$ into $2$-ovoids, so there exists an $m$-ovoid in $\W(3,\q)$ for each even positive integer $m$, cf. \cite{Bamberg2009Tight}; moreover Cossidente  et al. gave a construction of $\frac{(\q+1)}{2}$-ovoids in $\W(3,\q)$ when $p$ is odd in \cite{Cossidente2008On}. When $p=2$, Cossidente et al. gave a construction of $m$-ovoids for all possible $m$ in $\W(3,\q)$ in \cite{Cossidente2008On}. Next we consider the case of $\W(5,\q)$: First there are some sporadic examples of $m$-ovoids in $\W(5,\q)$, cf. \cite{Bamberg2007Tight}; when $p=2$,  Cossidente and Pavese \cite{Cossidente2014Intriguing} gave two constructions of nonclassical $(\q+1)$-ovoids in $\W(5,\q)$ by utilizing relative hemisystems and embedded Suzuki-Tits ovoids of a Hermitian surface. For general $\W(2r-1,\q)$, in terms of necessary conditions, it is proved in \cite{Bamberg2007Tight} that if there exists an $m$-ovoid in $\W(2r-1,\q)$ with $r>2$, then $m\geq \frac{(-3+\sqrt{9+4\q^r})}{2\q-2}$; as for constructions, Cossidente and Pavese \cite{Cossidente2018On} gave a partition of $\W(4n-1,\q)$ into a $\frac{(\q^{(2n-2)}-1)}{\q-1}$-ovoid, a $\q^{(2n-2)}$-ovoid and some $2\q^{(2n-2)}$-ovoidsfor $p^e$ even. Also we should mention an important construction method: by using field reduction, an $m'$-ovoid in low rank classical polar spaces gives rise to an $m$-ovoid in higher rank classical polar space. Specifically with the method of field reduction, an $m'$-ovoid in $W(2r'-1,p^{e'})$ gives rise to an $m$-ovoid in $W(2r-1,p^e)$ if $r'\mid r$ and $re=r'e'$, cf. \cite{Kelly2007Con}. In the end of Section 3 of this paper, we will construct $m$-ovoids in $W(2r-1,p^e)$, with $r$ being a prime, say $p_0$; since $p_0$ has only two factors 1 and $p_0$, an $m$-ovoid in $W(2p_0-1,p^e)$ can not be constructed from an $m'$-ovoid in a symplectic polar space with rank lower than $p_0$ by the field reduction method. 

As can be seen from the above summary there has been very little work on $m$-ovoids in high rank symplectic polar spaces.  In terms of constructions, when $q$ is odd, the only known contruction method for $m$-ovoids in high rank symplectic polar spaces is the field reduction method, cf. \cite{Kelly2007Con}. In this paper, we develope a new construction method which allows us to construct many new $m$-ovoids in high rank symplectic polar spaces. To facilitate the description of our method, we give the following equivalent definition of $m$-ovoids in $\W(2r-1,\q)$.

\begin{lemma}\label{equivdef}

Let $\cM$ be a set of points of $\W(2r-1,\q)$. Then $\cM$ is an $m$-ovoid if and only if
\begin{equation}\label{eqn_def}
|P^{\perp}\cap \cM|=
\begin{cases}
m(\q^{(r-1)}+1)-\q^{(r-1)}, &\text{if }P\in \cM,\\
m(\q^{(r-1)}+1), &\text{otherwise.}	
\end{cases}
\end{equation}

\end{lemma}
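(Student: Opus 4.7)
The plan is a two-way double-counting argument, based on the standard fact that in $\W(2r-1,\q)$ the number of maximals through a totally isotropic $k$-subspace is $\prod_{i=1}^{r-k}(\q^i+1)$. Writing $\mu_1=\prod_{i=1}^{r-1}(\q^i+1)$ for the number of maximals through a point and $\mu_2=\prod_{i=1}^{r-2}(\q^i+1)$ for the number through a totally isotropic line, the key identity $\mu_1/\mu_2=\q^{r-1}+1$ will drive both implications.

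For the forward direction, fix a point $P$ and count ordered pairs $(Q,\Sigma)$ with $Q\in\cM\setminus\{P\}$, $\Sigma$ a maximal, and $\{P,Q\}\subseteq\Sigma$. Counting by $\Sigma$: there are $\mu_1$ maximals through $P$, each meets $\cM$ in exactly $m$ points, so the contribution per maximal is $m-1$ if $P\in\cM$ and $m$ otherwise. Counting by $Q$: any $Q$ contributing must lie in $P^\perp$ (since $\la P,Q\ra$ is totally isotropic), and each such $Q$ lies in $\mu_2$ maximals through $P$. Equating and solving for $|P^\perp\cap\cM|$ gives $(m-1)(\q^{r-1}+1)+1=m(\q^{r-1}+1)-\q^{r-1}$ when $P\in\cM$, and $m(\q^{r-1}+1)$ otherwise, matching (\ref{eqn_def}).

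For the converse, assume (\ref{eqn_def}) and let $n_\Sigma:=|\Sigma\cap\cM|$ for each maximal $\Sigma$; the goal is $n_\Sigma=m$ for all $\Sigma$. First determine $|\cM|$ by double-counting pairs $(P,Q)$ with $Q\in\cM$ and $P\in Q^\perp$: counting by $Q$ yields $|\cM|\cdot(\q^{2r-1}-1)/(\q-1)$, while counting by $P$ and splitting on whether $P\in\cM$ via (\ref{eqn_def}) yields $mv(\q^{r-1}+1)-|\cM|\q^{r-1}$ with $v=(\q^{2r}-1)/(\q-1)$; the algebraic simplification
\begin{equation*}
\frac{\q^{2r-1}-1}{\q-1}+\q^{r-1}=\frac{(\q^{r-1}+1)(\q^r-1)}{\q-1}
\end{equation*}
then gives $|\cM|=m(\q^r+1)$.

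Next, compute $\sum_\Sigma n_\Sigma$ and $\sum_\Sigma n_\Sigma^2$ by switching the order of summation. The first sum equals $|\cM|\mu_1$. For the second, expand $n_\Sigma^2$ as the number of ordered pairs $(P,Q)\in(\Sigma\cap\cM)^2$; the diagonal contributes $|\cM|\mu_1$, while for $P\neq Q$ the pair must be collinear and contributes $\mu_2$ times the number of such pairs, which by (\ref{eqn_def}) equals $|\cM|\cdot((m-1)(\q^{r-1}+1))$. Using $\mu_1=(\q^{r-1}+1)\mu_2$, this collapses to $\sum_\Sigma n_\Sigma^2=m|\cM|\mu_1$. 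With the total number of maximals $M=(\q^r+1)\mu_1$ and $|\cM|=m(\q^r+1)$, a direct expansion gives
\begin{equation*}
\sum_\Sigma(n_\Sigma-m)^2=\sum_\Sigma n_\Sigma^2-2m\sum_\Sigma n_\Sigma+m^2M=m|\cM|\mu_1-2m|\cM|\mu_1+m^2M=0,
\end{equation*}
forcing $n_\Sigma=m$ for every maximal $\Sigma$. The main obstacle is the bookkeeping in the converse: getting the correct value of $|\cM|$ before the variance computation is essential, and all cancellations hinge on the clean ratio $\mu_1/\mu_2=\q^{r-1}+1$.
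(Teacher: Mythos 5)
Your argument is correct, and it is self-contained, which is more than the paper itself offers: the paper does not prove Lemma~\ref{equivdef} at all, but refers the reader to \cite{Bamberg2007Tight}, where the characterization is obtained through the machinery of intriguing sets (working with characteristic vectors and the eigenvalues/eigenspaces of the collinearity relation of the polar space). Your route is a purely elementary double count. The forward direction (counting pairs $(Q,\Sigma)$ through a fixed point $P$ and using that the maximals through a point, respectively through a totally isotropic line, number $\mu_1=\prod_{i=1}^{r-1}(\q^i+1)$ and $\mu_2=\prod_{i=1}^{r-2}(\q^i+1)$, with $\mu_1/\mu_2=\q^{r-1}+1$) is the standard count and is carried out correctly, including the $P\in P^\perp$ bookkeeping that produces the $-\q^{r-1}$ term. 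The converse via first pinning down $|\cM|=m(\q^r+1)$ from a point--hyperplane incidence count and then the variance identity $\sum_\Sigma(n_\Sigma-m)^2=\sum_\Sigma n_\Sigma^2-2m\sum_\Sigma n_\Sigma+m^2M=0$ is also correct; I checked the algebraic identity $\frac{\q^{2r-1}-1}{\q-1}+\q^{r-1}=\frac{(\q^{r-1}+1)(\q^r-1)}{\q-1}$, the value $\sum_\Sigma n_\Sigma^2=m|\cM|\mu_1$ (using that the ordered non-diagonal collinear pairs in $\cM$ number $|\cM|(m-1)(\q^{r-1}+1)$), and $M=(\q^r+1)\mu_1$, and everything cancels as you claim; the case $r=2$ (where $\mu_2=1$) is covered as well. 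Compared with the eigenvalue approach of \cite{Bamberg2007Tight}, your argument buys elementarity and transparency at the cost of some bookkeeping, while the intriguing-set framework buys generality (it treats $m$-ovoids and tight sets of all finite classical polar spaces uniformly and immediately yields the associated strongly regular graph structure that the paper exploits later).
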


For a proof of the lemma, we refer the reader to \cite{Bamberg2007Tight}. The basic idea of our construction of $m$-ovoids is to use a partial converse to Theorem 11 in \cite{Bamberg2007Tight}. Concerning $\W(2r-1,\q)$, Theorem 11 in \cite{Bamberg2007Tight} says that an $m$-ovoid gives rise to a strongly regular Cayley graph over $(\F_\q^{2r},+)$ of negative Latin square type. A partial converse to this statement is true; that is, a  strongly regular Cayley graph over $(\F_\q^{2r},+)$ of negative Latin square type with some special property can give rise to an $m$-ovoid in $\W(2r-1,\q)$ (the special property is the ``self-dual" property; this will be made precise in Theorem~\ref{th main}). To implement this strategy, we start with some strongly regular Cayley graphs ${\rm Cay}(\F_q, D)$ in \cite{Brouwer1999Journal}, and equip the ambient finite field $\F_q$, now viewed as a vector space over a subfield $\F_\q$, with an appropriate non-degenerate alternating form $f$, and show that with respect to $f$, $D$ is ``self-dual", hence the set $\cM$ of projective points obtained from $D$ will satisfy (\ref{eqn_def}), giving rise to an $m$-ovoid in the symplectic polar space $\W(2r-1,p^e)$ of rank $r$. The organization of this paper is as follows. In Section 2, we give some preliminaries on  strongly regular graphs and describe the construction using cyclotomic classes of finite fields in \cite{Brouwer1999Journal}. In Section 3, we first describe our construction strategy, and then give the details of our construction of $m$-ovoids. We conclude the paper with Section 4.

\section{Preliminaries}

A {\it strongly regular graph} srg$(v,k,\lambda,\mu)$ is a simple and undirected graph, neither complete nor edgeless, that has the following properties:

(1) It is a regular graph of order $v$ and valency $k$.

(2) For each pair of adjacent vertices $x,y$, there are exactly $\lambda$ vertices adjacent to both $x$ and $y$.

(3) For each pair of nonadjacent vertices $x,y$, there are exactly $\mu$ vertices adjacent to both $x$ and $y$.\\

For example, the pentagon is an srg$(5,2,0,1)$ and the Petersen graph is an srg$(10,3,0,1)$. The parameters of an srg$(v,k,\lambda,\mu)$ satisfy the following basic relation.

\begin{lemma} \label{rel2}{\em \citep[Section 10.1]{Godsil2001Algebraic}}
Let $\Gamma$ be an srg$(v,k,\lambda,\mu)$. Then 
\begin{equation*}
k(k-\lambda-1)=(v-k-1)\mu.
\end{equation*}	
\end{lemma}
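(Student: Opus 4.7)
The plan is to prove this identity by a standard double-counting argument applied to a fixed vertex. Specifically, I would fix an arbitrary vertex $x$ of $\Gamma$ and count in two ways the number $N$ of ordered pairs $(y,z)$ of vertices such that $y$ is a neighbor of $x$, $z$ is distinct from $x$ and non-adjacent to $x$, and $y$ is adjacent to $z$.

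First, I would count by choosing the neighbor $y$ first. Since $\Gamma$ is $k$-regular, there are exactly $k$ candidates for $y$. Once $y$ is fixed, it has $k$ neighbors in total; one of them is $x$ itself, and by property (2) of the srg definition exactly $\lambda$ of them are common neighbors of $x$ and $y$, and therefore neighbors of $x$. The remaining $k-1-\lambda$ neighbors of $y$ are then vertices distinct from $x$ and non-adjacent to $x$, i.e., valid choices for $z$. Summing over $y$ gives $N=k(k-\lambda-1)$.

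Next, I would count the same set of pairs by choosing $z$ first. The number of vertices distinct from $x$ and non-adjacent to $x$ is $v-k-1$. For each such $z$, property (3) tells us that $x$ and $z$ have exactly $\mu$ common neighbors, and each common neighbor is a legitimate choice for $y$. Thus $N=(v-k-1)\mu$. Equating the two expressions for $N$ yields the stated identity $k(k-\lambda-1)=(v-k-1)\mu$.

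There is essentially no obstacle here; the only care needed is to correctly identify the set being counted and to avoid double-subtracting the vertex $x$ when partitioning the neighborhood of $y$. Once the partition $\{x\}\sqcup(\text{common neighbors of }x\text{ and }y)\sqcup(\text{neighbors of }y\text{ that are non-neighbors of }x)$ is in place, the two counts follow immediately from regularity and properties (2) and (3) of the definition.
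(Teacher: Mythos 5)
Your double-counting argument is correct and is exactly the standard proof of this identity (the paper itself gives no proof, citing Godsil--Royle, where the same edge-count between the neighborhood and non-neighborhood of a fixed vertex is used). Nothing further is needed.
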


Let $\Gamma$ be a (simple, undirected) graph. The adjacency matrix of $\Gamma$ is the $(0,1)$-matrix $A$ with both rows and columns indexed by the vertex set of $\Gamma$, where $A_{xy} = 1$ when there is an edge between $x$ and $y$ in $\Gamma$ and $A_{xy} = 0$ otherwise. The {\it eigenvalues} of $\Gamma$ are defined to be those of its adjacency matrix $A$.  For convenience we call an eigenvalue of $\Gamma$ {\it restricted} if it has an eigenvector which is not a multiple of the all-ones vector ${\bf 1}$. (For a $k$-regular connected graph, the restricted eigenvalues are simply the eigenvalues different from $k$.)

\begin{thm}\label{char}
For a simple graph $\Gamma$ of order $v$, neither complete nor edgeless, with adjacency matrix $A$, the following are equivalent:
\begin{enumerate}
\item $\Gamma$ is strongly regular with parameters $(v, k, \lambda, \mu)$ for certain integers $k, \lambda, \mu$,
\item $A^2 =(\lambda-\mu)A+(k-\mu) I+\mu J$ for certain real numbers $k,\lambda, \mu$, where $I, J$ are the identity matrix and the all-ones matrix, respectively, 
\item $A$ has precisely two distinct restricted eigenvalues $\alpha_1,\alpha_2$.
\end{enumerate}
\end{thm}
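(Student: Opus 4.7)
The plan is to establish the three-way equivalence via (1) $\Leftrightarrow$ (2) and (2) $\Leftrightarrow$ (3), with the first handled by an entry-wise computation on $A^2$ and the second by a spectral argument exploiting the fact that $J$ annihilates $\mathbf{1}^\perp$.

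For (1) $\Leftrightarrow$ (2), the key observation is that the $(x,y)$-entry of $A^2$ counts walks of length two from $x$ to $y$, i.e.\ the number of common neighbors of $x$ and $y$. Under hypothesis (1) this entry equals $k$ on the diagonal, $\lambda$ on edges, and $\mu$ on non-edges, which is precisely the matrix identity $A^2 = kI + \lambda A + \mu(J - I - A)$; collecting terms gives (2). Conversely, (2) forces the diagonal of $A^2$ to be the constant $k$, so $\Gamma$ is $k$-regular (the diagonal of $A^2$ records vertex degrees), and the off-diagonal entries then read off directly as the strong regularity counts $\lambda$ and $\mu$.

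For (2) $\Rightarrow$ (3), I would take any eigenvector $\mathbf{u}$ of $A$ with $\mathbf{u} \perp \mathbf{1}$; then $J\mathbf{u} = \mathbf{0}$, and applying the identity in (2) to $\mathbf{u}$ shows that the corresponding eigenvalue $\alpha$ satisfies the scalar quadratic $\alpha^2 - (\lambda - \mu)\alpha - (k - \mu) = 0$. Hence there are at most two distinct restricted eigenvalues, and one excludes the degenerate single-root case by noting that $\Gamma$ being neither complete nor edgeless rules out $A = J - I$ and $A = 0$, the only configurations with a single restricted eigenvalue. For (3) $\Rightarrow$ (2), the matrix $A$ is real symmetric hence orthogonally diagonalizable, so if the only restricted eigenvalues are $\alpha_1, \alpha_2$, the matrix $M = (A - \alpha_1 I)(A - \alpha_2 I)$ annihilates $\mathbf{1}^\perp$. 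Since $M$ is symmetric with image contained in $\mathrm{span}(\mathbf{1})$, a short argument forces $M = cJ$ for some real $c$. Expanding and inspecting the diagonal of $M$ shows that the degree of every vertex equals the common value $c - \alpha_1\alpha_2$, so $\Gamma$ is regular, and identifying coefficients recovers the quadratic identity of (2) with $\lambda - \mu = \alpha_1 + \alpha_2$, $k - \mu = -\alpha_1 \alpha_2$, and $\mu = c$.

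The step I expect to require the most care is the ``exactly two'' assertion inside (2) $\Rightarrow$ (3): one must confirm that the restricted spectrum cannot collapse to a single eigenvalue unless $\Gamma$ is complete or edgeless, which is precisely what the hypothesis rules out. A smaller technical point is the identification $M = cJ$ in (3) $\Rightarrow$ (2), which relies on the symmetry of $M$ together with the fact that its image lies in the one-dimensional space $\mathrm{span}(\mathbf{1})$.
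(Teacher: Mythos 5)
Your proposal is correct: the walk-counting identity for (1) $\Leftrightarrow$ (2) and the spectral argument for (2) $\Leftrightarrow$ (3) (restricted eigenvalues satisfy the quadratic $x^2-(\lambda-\mu)x-(k-\mu)=0$, with the single-eigenvalue degeneracy excluded by the non-complete, non-edgeless hypothesis, and $M=(A-\alpha_1 I)(A-\alpha_2 I)=cJ$ in the converse direction) are all sound. The paper itself offers no proof of Theorem~\ref{char}, referring instead to Brouwer--Haemers, and your argument is essentially the standard proof given in that reference.
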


For a proof of Theorem~\ref{char}, we refer the reader to \cite{Brouwer2012Spectra}. For later use, we write down the explicit relations between the paramenters of an srg$(v,k,\lambda,\mu)$ and its restricted eigenvalues $\alpha_1,\alpha_2$.

\begin{lemma}\label{relation}{\em \citep[Section 10.2]{Godsil2001Algebraic}}
Let $\Gamma$ be an srg$(v,k,\lambda,\mu)$ with restricted eigenvalues $\alpha_1$, $\alpha_2$, where $\alpha_1>\alpha_2$. Then
\begin{equation}\label{eigenvalue}
\begin{split}
&\alpha_1=\frac{(\lambda-\mu)+\sqrt{(\lambda-\mu)^2+4(k-\mu)}}{2},\\
&\alpha_2=\frac{(\lambda-\mu)-\sqrt{(\lambda-\mu)^2+4(k-\mu)}}{2},
\end{split}	
\end{equation}
with multiplicities $m_1=\frac{1}{2}((v-1)-\frac{2k+(v-1)(\lambda-\mu)}{\sqrt{(\lambda-\mu)^2+4(k-\mu)}})$ and $m_2= \frac{1}{2}((v-1)+\frac{2k+(v-1)(\lambda-\mu)}{\sqrt{(\lambda-\mu)^2+4(k-\mu)}}) $ respectively. 

%Moreover,
%\begin{equation}\label{eigen2}
%\begin{split}
% &\lambda=\alpha_1\alpha_2+\alpha_1+\alpha_2+k,\\
%&\mu=\alpha_1\alpha_2+k.
% \end{split}	
%\end{equation}	
\end{lemma}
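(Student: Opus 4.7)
The plan is to combine the matrix identity supplied by Theorem~\ref{char} with elementary trace considerations. First I would fix a restricted eigenvalue $\alpha$ with eigenvector $\mathbf{v}$ not proportional to the all-ones vector $\mathbf{1}$. Since $\Gamma$ is $k$-regular, $\mathbf{1}$ is an eigenvector of $A$ with eigenvalue $k$, and after subtracting its projection onto $\mathbf{1}$ I may assume $\mathbf{v}$ is orthogonal to $\mathbf{1}$, so $J\mathbf{v}=\mathbf{0}$. Applying the identity $A^{2}=(\lambda-\mu)A+(k-\mu)I+\mu J$ to $\mathbf{v}$ then gives
\[
\alpha^{2}\,\mathbf{v} \;=\; \bigl((\lambda-\mu)\alpha+(k-\mu)\bigr)\mathbf{v},
\]
so every restricted eigenvalue satisfies the quadratic $t^{2}-(\lambda-\mu)t-(k-\mu)=0$. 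Its two real roots are precisely the $\alpha_{1}$ and $\alpha_{2}$ displayed in \eqref{eigenvalue}, and the discriminant $(\lambda-\mu)^{2}+4(k-\mu)$ is nonnegative because the matrix $A$ is symmetric and hence all its eigenvalues are real.

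Next I would determine the multiplicities $m_{1}$ and $m_{2}$ from two linear relations. The orthogonal complement of $\mathbf{1}$ is an $A$-invariant subspace of dimension $v-1$, and the argument above shows that $A$ restricted to this complement has only $\alpha_{1}$ and $\alpha_{2}$ as eigenvalues; hence $m_{1}+m_{2}=v-1$. Since the adjacency matrix $A$ of a simple graph has zero diagonal, taking the trace gives $0=\tr(A)=k+m_{1}\alpha_{1}+m_{2}\alpha_{2}$, where the isolated $k$ accounts for the one-dimensional contribution from $\mathbf{1}$. Solving the resulting $2\times 2$ linear system for $(m_{1},m_{2})$, and substituting $\alpha_{1}-\alpha_{2}=\sqrt{(\lambda-\mu)^{2}+4(k-\mu)}$ together with the closed form of $\alpha_{2}$ from \eqref{eigenvalue}, yields the claimed expressions for the multiplicities after a short simplification.

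I do not foresee a genuine obstacle: the main computation is the trace/dimension bookkeeping in the second step, which is routine algebra. The one point deserving care is confirming that no restricted eigenvalue other than $\alpha_{1},\alpha_{2}$ arises, but this is immediate from the quadratic derived in the first step. So the whole proof is essentially a two-step calculation, and I expect it to fit in a few lines.
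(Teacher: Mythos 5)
Your argument is correct and is essentially the standard proof of this lemma (the paper itself gives no proof, citing Godsil--Royle, where the same argument appears): apply $A^{2}=(\lambda-\mu)A+(k-\mu)I+\mu J$ to an eigenvector orthogonal to $\mathbf{1}$ to get the quadratic $t^{2}-(\lambda-\mu)t-(k-\mu)=0$ for the restricted eigenvalues, then obtain the multiplicities from $m_{1}+m_{2}=v-1$ and $k+m_{1}\alpha_{1}+m_{2}\alpha_{2}=\textup{Tr}(A)=0$. One tiny remark: when $\alpha\neq k$ the eigenvector is automatically orthogonal to $\mathbf{1}$ by symmetry of $A$, so the projection-subtraction step is only needed in the degenerate (disconnected) case and does not affect the conclusion.
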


An effective method to construct strongly regular graphs is by the Cayley graph construction. Let $G$ be an additively written abelian group of order $v$, and let $D$ be a subset of $G$ such that $0\not\in D$ and $-D=D$, where $-D=\{-d\mid d\in D\}$. The {\it Cayley graph on $G$ with connection set $D$}, denoted by ${\rm Cay}(G,D)$, is the graph with the elements of $G$ as vertices; two vertices are adjacent if and only if their difference belongs to $D$. Let $\hat{G}$ be the (complex) character group of G. All the eigenvalues of $\Cay(G,D)$ are given by $\psi(D):=\sum_{d\in D}\psi(d)$, $\psi\in \hat{G}$.  Note that $\psi_0(D)=|D|$, where $\psi_0$ is the principal character of $G$.  By Theorem~\ref{char}, the graph $\Cay(G,D)$ is strongly regular if and only if $D$ generates $G$ and $\{\psi(D) : \psi \in \hat{G}\setminus \{ \psi_0 \}\}=\{\alpha_1,\alpha_2\}$ with $\alpha_1\neq \alpha_2$. If this is the case, then the connection set $D$ is called a {\em partial difference set}, and the Delsarte \emph{dual} of $D$ is defined to be either of $\{\psi \in \hat{G}:\,\psi(D)=\alpha_i\}$, $i=1,\,2$.

An srg$(v,k,\lambda,\mu)$ is said to be of {\it Latin square type} (resp. {\it negative Latin square type}) if $(v,k,\lambda,\mu)=(\gu^2,a(\gu-\epsilon),\epsilon \gu+a^2-3\epsilon a,a^2-\epsilon a)$ and $\epsilon=1$ (resp. $\epsilon=-1$). 

Let $G$ and $D$ be the same as above. Suppose that ${\rm Cay}(G,D)$ is an srg$(v,k,\lambda,\mu)$. Then, by Lemma \ref{relation}, one of the duals of $D$ has the same size as that of $D$ if and only if 
$$k= \frac{1}{2}((v-1)-\frac{2k+(v-1)(\lambda-\mu)}{\sqrt{(\lambda-\mu)^2+4(k-\mu)}})\; {\rm or}\; k= \frac{1}{2}((v-1)+\frac{2k+(v-1)(\lambda-\mu)}{\sqrt{(\lambda-\mu)^2+4(k-\mu)}}),$$
which in turn is equivalent to
\begin{equation}\label{eq size}
\frac{(2k+(v-1)(\lambda-\mu))^2}{(\lambda-\mu)^2+4(k-\mu)}=(v-1-2k)^2.	
\end{equation}
After some tedious computations, we see that when (\ref{eq size}) is satisfied, $v$ must be a square, and $\mu=(\frac{k}{\sqrt{v}-1})^2-\frac{k}{\sqrt{v}-1}$ or $\mu=(\frac{k}{\sqrt{v}+1})^2+\frac{k}{\sqrt{v}+1}$. Note that by Lemma \ref{rel2}, $\lambda=k-1+(1-\frac{v-1}{k})\mu$. Consequently if one of the duals of $D$ has the same size as that of $D$, then the strongly regular graph ${\rm Cay}(G,D)$ must be of Latin square or negative Latin square type.	
 
Conversely, when $(v,k,\lambda,\mu)=(\gu^2,a(\gu-1),\gu+a^2-3a,a^2-a)$, by Lemma \ref{relation}, $\alpha_1=\gu-a$ with multiplicity $f=a(\gu-1)$, and $\alpha_2=-a$ with multiplicity $g=(\gu+1-a)(\gu-1)$. Thus the dual $\{\psi\in \hat{G}:\psi(D)=\alpha_1\}$ has the same size as that of $D$. When $(v,k,\lambda,\mu)=(\gu^2,a(\gu+1),-\gu+a^2+3a,a^2+a)$, by Lemma \ref{relation}, $\alpha_1=a$ with multiplicity $f=(\gu-1-a)(\gu+1)$, and $\alpha_2=a-\gu$ with multiplicity $g=a(\gu+1)$. Thus the dual $\{\psi\in \hat{G}:\psi(D)=\alpha_2\}$ has the same size as that of $D$.

We will use Cayley graphs ${\rm Cay}(\F_q, D)$, where the connection sets are unions of cyclotomic classes, for the purpose of constructing $m$-ovoids in symplectic polar spaces. To this end, we define cyclotomic classes of finite fields. Let $q=p^{s}$ be a prime power, and let $\gamma$ be a fixed primitive element of $\F_q$. Let $N>1$ be a divisor of $q-1$. We define the $N^{\rm th}$ {\em cyclotomic classes} 
$C_i^{(N,q)}$ of $\F_q$ by
$$C_i^{(N,q)}=\{\gamma^{jN+i}\mid 0\leq j\leq \frac {q-1}{N}-1\},$$ where $0\leq i\leq N-1$. That is, $C_0^{(N,q)}$ is the subgroup of $\F_q^*$ consisting of all nonzero $N^{\rm th}$ powers in $\F_q$, and $C_i^{(N,q)}=\gamma^i C_0^{(N,q)}$, for $1\leq i\leq N-1$. In the sequel, if $N,q$ are clear from the context, we will simply write $C_i^{(N,q)}$ as $C_i$.

Suppose that $q=p^s$ with $p$ a prime, and let $e$ be any positive divisor of $s$. Let $\textup{Tr}_{q/p^e}:\mathbb{F}_{q}\rightarrow \mathbb{F}_{p^e}$ be the trace function from $\mathbb{F}_{q}$ to $\mathbb{F}_{\q}$, i.e., 
$$\tr_{q/p^e}(x)=x+x^{p^e}+\cdots+x^{p^{e(s/e-1)}}, \; \forall x\in\F_q.$$
Set $\omega_p:=\exp\left(\frac{2\pi\sqrt{-1}}{p}\right)$. Define $\psi_{\F_q}$: $\F_q\to \C^\ast$ by 
\[
\psi_{\F_q}(x)=\omega_p^{\tr_{q/p}(x)}, \; \forall x\in\F_q.
\]
The map $\psi_{\F_q}$ is a character of the additive group of $\F_q$, and it is called the {\it canonical} additive character of $\F_q$. For any $y\in \F_q$, we define $\psi_{\F_q,y}$: $\F_q\to \C^\ast$ by
\[
\psi_{\F_q,y}(x)=\psi_{\F_q}(xy), \; \forall x\in\F_q.
\]
It is well known that $\{\psi_{\F_q,y}\mid y\in \F_q\}=\widehat{(\F_q,+)}$.

Next we recall the following construction of strongly regular Cayley graphs given in \cite{Brouwer1999Journal}, see also \citep[Section 9.8.5]{Brouwer2012Spectra}. Suppose that $q=p^s$ with $p$ prime, and let $N$ be a proper divisor of $q-1$ such that  $p^{\ell}\equiv -1\pmod{N}$ for some positive integer $\ell$. Choose $\ell$ minimal and write $s=2\ell t$. Take a proper subset $J\subset \Z_N$ of size $u$. If $q$ is even, then the choice of $J$ is arbitrary; if $q$ is odd, then we require that $N|\frac{q-1}{2}$ and $J+\frac{q-1}{2}=J$. Set $D_J=\cup_{i\in J}C_i$. Then the graph $\Cay(\F_q,\,D_J)$
is strongly regular with eigenvalues
\begin{equation}\label{para in th}
\begin{split}
	k=\frac{q-1}{N}u ,\quad&\text{with multiplicity 1,} \\
	\alpha_1=\frac{u}{N}(-1+(-1)^t\sqrt{q}),\quad&\text{with multiplicitiy $q-1-k$,}\\
	\alpha_2=\frac{u}{N}(-1+(-1)^t\sqrt{q})+(-1)^{t+1}\sqrt{q},\quad& \text{with multiplicity $k$.}
\end{split}	
\end{equation}
To be specific, for $i=0,1,\ldots,N-1$,  we have
\begin{equation}\label{srg dual}
\psi_{\F_{q}}(\gamma^i D)=
\begin{cases}
\alpha_2,& \text{ if $\varepsilon^s=1$ and $i\in -J\, \pmod{N}$} \\
& \text{ \, \, or $\varepsilon^s=-1$ and $i\in -J+N/2\, \pmod{N}$}, \\
\alpha_1, & \text{ otherwise, }
\end{cases}
\end{equation}
where $\varepsilon=\begin{cases}
-1,   &\textup{ if $N$ is even and $\frac{p^{\ell}+1}{N}$ is odd;}\\
1,&\textup{otherwise.}	\end{cases}$ 

\noindent The graph $\Cay(\F_q,\,D_J)$ is of Latin square type (resp. negative Latin square type) if $t$ is odd (resp. even).

%===================The third part==================
\section{Construcions of $m$-ovoids in $\W(2r-1,p^e)$ via strongly regular Cayley graphs}

Throughout the rest of this paper, we fix the following notation. Let $q=p^s$, where $p$ is an odd prime and $s=2er$ for some positive integers $e$ and $r\geq 2$. We view $\F_q$ as a $2r$ -dimensional vector space over $\F_{\q}$ (a subfield of $\F_q$), and denote this $\F_{\q}$-vector space by $V$. As usual, for a nonzero $v\in V$, we write $\la v\ra$ for the projective point in the projective space $\PG(V)$ corresponding to the 1-dimensional $\F_{\q}$-subspace spanned by $v$. We will equip a bilinear form on $V$ as follows. Let $L(X)= \sum_{i=0}^{2r-1}c_iX^{p^{ie}}\in \F_q[X]$ be a linearized polynomial. Define $f: V\times V \rightarrow \F_{\q}$ by 
$$f(x,y)=\tr_{q/p^e}(xL(y)), \forall (x,y)\in V\times V.$$
Then $f$ is an $\F_{\q}$-bilinear form on $V$.

\begin{lemma}\label{lem_AL}
With notation as above, the $\F_{\q}$-bilinear form  $f$ on $V$ is alternating if and only $c_0=0$ and $c_{2r-i}^{p^{ie}}=-c_i$ for $1\le i\le 2r-1$. Moreover, the form $f$ is non-degenerate if and only if $x\mapsto L(x)$ is a bijection from $\F_q$ to itself.
\end{lemma}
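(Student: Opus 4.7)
The plan is to exploit the non-degeneracy of the trace form on $\F_q/\F_{\q}$, together with a Frobenius-invariance trick for the trace, to translate the alternating condition into the vanishing of a single $\F_{\q}$-linearized polynomial whose coefficients can be read off directly.

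First, since $p$ is odd, the bilinear form $f$ is alternating if and only if it is anti-symmetric, i.e., $f(x,y) + f(y,x) = 0$ for all $x,y \in V$. Writing this out,
\[
f(x,y) + f(y,x) = \tr_{q/p^e}\!\left(\sum_{i=0}^{2r-1} c_i\bigl(x y^{p^{ie}} + y x^{p^{ie}}\bigr)\right).
\]
For each term $c_i y x^{p^{ie}}$ I would use the identity $\tr_{q/p^e}(a) = \tr_{q/p^e}(a^{p^{(2r-i)e}})$ (valid because $\tr_{q/p^e}$ is invariant under the Frobenius $a\mapsto a^{p^e}$) to rewrite
\[
\tr_{q/p^e}\bigl(c_i y x^{p^{ie}}\bigr) = \tr_{q/p^e}\bigl(c_i^{p^{(2r-i)e}}\, x\, y^{p^{(2r-i)e}}\bigr),
\]
using $x^{p^{2re}}=x$. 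After re-indexing by $j = 2r - i$ for $i \ge 1$ and isolating the $i = 0$ contribution, the sum collapses to $\tr_{q/p^e}\bigl(x\,\tilde L(y)\bigr)$, where
\[
\tilde L(y) = 2c_0\, y + \sum_{i=1}^{2r-1} \bigl(c_i + c_{2r-i}^{p^{ie}}\bigr)\, y^{p^{ie}}.
\]

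Next I would invoke the non-degeneracy of the trace pairing $(x,z)\mapsto \tr_{q/p^e}(xz)$ on $\F_q$. Vanishing of $\tr_{q/p^e}\bigl(x\tilde L(y)\bigr)$ for all $x \in \F_q$ forces $\tilde L(y)=0$, and then vanishing for every $y \in \F_q$, combined with the degree bound $\deg \tilde L \le p^{(2r-1)e} < q$, forces $\tilde L$ to be the zero polynomial in $\F_q[y]$. Since the exponents $1, p^e, p^{2e}, \ldots, p^{(2r-1)e}$ are pairwise distinct, comparing coefficients yields $2c_0 = 0$ (so $c_0=0$ since $p$ is odd) and $c_i + c_{2r-i}^{p^{ie}} = 0$ for $1 \le i \le 2r-1$. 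The converse direction is immediate: under these conditions $\tilde L\equiv 0$, so $f$ is anti-symmetric and therefore alternating.

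For the non-degeneracy claim, the same non-degeneracy of the trace pairing shows that the radical $\{y \in V : f(x,y) = 0 \text{ for all } x \in V\}$ coincides with $\ker L$. Hence $f$ is non-degenerate if and only if $L$ has trivial kernel, and since $L$ is an $\F_{\q}$-linear endomorphism of the finite-dimensional space $\F_q$, this is equivalent to $L$ being bijective. The only real obstacle is arranging the index re-labelling in the first step so that both halves of the sum fuse into a single linearized polynomial in $y$; once that is done, the trace-pairing duality and the degree bound finish the argument.
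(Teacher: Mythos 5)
Your proof is correct, but the route to the alternating condition differs from the paper's. The paper works directly with the intrinsic definition: it expands $f(x,x)=\tr_{q/p^e}(xL(x))$ into a single-variable polynomial $\sum_{0\le i<k\le 2r-1}(c_{k-i}^{p^{ie}}+c_{2r-k+i}^{p^{ke}})x^{p^{ie}+p^{ke}}+\sum_j c_0^{p^{je}}x^{2p^{je}}$ of degree at most $q-1$, and since this vanishes on all of $\F_q$ it is the zero polynomial, whence the conditions follow by comparing coefficients. You instead symmetrize, replacing ``alternating'' by ``anti-symmetric'' (legitimate here because $p$ is odd by the standing hypothesis of Section~3, though note the paper's argument is characteristic-free), and use the Frobenius-invariance of the trace to compute the adjoint of $L$ with respect to the trace pairing, so that $f(x,y)+f(y,x)=\tr_{q/p^e}\bigl(x\tilde L(y)\bigr)$ with $\tilde L(y)=2c_0y+\sum_{i=1}^{2r-1}(c_i+c_{2r-i}^{p^{ie}})y^{p^{ie}}$; trace non-degeneracy plus the degree bound $\deg\tilde L\le p^{(2r-1)e}<q$ then force $\tilde L$ to be the zero polynomial. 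What your approach buys is a cleaner structural statement (the conditions say exactly that $L$ equals minus its trace-adjoint), and it reuses the same trace-duality argument that both you and the paper employ, in essentially identical fashion, for the non-degeneracy claim; what the paper's approach buys is independence from the characteristic and a slightly more self-contained computation, at the cost of a messier double-indexed coefficient comparison. Both arguments are complete and correct.
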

\begin{proof} The form $f$ is alternating if and only if $f(x,x)=0$ for all $x\in V$. We have
\begin{align*}
f(x,x)&=\sum_{i=0}^{2r-1}\tr_{q/p^e}\left(c_ix^{1+p^{ie}}\right)=\sum_{i,j=0}^{2r-1}c_i^{p^{je}}x^{p^{je}+p^{(i+j)e}}\\
&=\sum_{0\le i<k\le 2r-1} (c_{k-i}^{p^{ie}}+c_{2r-k+i}^{p^{ke}})x^{p^{ie}+p^{ke}}+\sum_{j=0}^{2r-1}c_0^{p^{je}}x^{2p^{je}}.
\end{align*}
We view $f(x,x)$ as a polynomial in $x$ with coefficients in $\F_q$. As can be seen from the above expression, $f(x,x)$ has degree less than or equal to $q-1$. The polynomial $f(x,x)$ vanishes at every element of $\F_q$ if and only if it is the zero polynomial. The first claim of the lemma now follows by comparing the coefficients.

Assume that $f$ is non-degenerate.  Suppose that $f(x,y)=\tr_{q/p^e}(xL(y))=0$ for all $x\in\F_q$. We must have $L(y)=0$ since $\tr_{q/p^e}$ is a nontrivial linear form on $V$. By the assumption that $f$ is non-degenerate, we have $y=0$. It follows that $x\mapsto L(x)$ is a bijection since $L$ is linearized. The proof of the converse is straightforward. This proves the second claim of the lemma.
\end{proof}
 
Let $L(X)= \sum_{i=1}^{2r-1}c_iX^{p^{ie}}\in \F_q[X]$ be a linearized permutation polynomial such that $f(x,y)=\tr_{q/p^e}(xL(y))$ is a non-degenerate alternating form. Equip $V=(\F_q,\,+)$ with the alternating form $f$, and this will be our model for the symplectic polar space $\W(2r-1,p^e)$ of rank $r$. For any nonzero $y\in V$, we define
$$\langle y\rangle^{\perp}=\{\langle x\rangle \mid f(x,y)=0, \; x\in V\setminus \{0\}\}.$$
Also for any $y\in V$ we define $\Psi_y\in\widehat{(\F_q,+)}$ as follows:
\[
\Psi_y(x)=\psi_{\F_q} (xL(y))=\psi_{\F_{p^e}}(f(x,y)).
\]
It is well known that $\{\Psi_y \mid y\in V\}=\widehat{(\F_q,+)}$. We now give the definition of self-dual partial difference sets in $(\F_q,+)$. (Note that such a self-dual partial difference set is necessarily of Latin square type or negative Latin square type by the discussions in the last section.) Let $D$ be an $\F_{p^e}^*$-invariant subset of $\F_q^*$. That is, $D$ is a union of some cosets of $\F_{p^e}^*$ in $\F_q^*$. Assume that $\Cay(\mathbb{F}_q,D)$ is a strongly regular graph of negative Latin square type, and with parameters
\[
(q,|D|,-\sqrt{q}+(\frac{|D|}{\sqrt{q}+1})^2+\frac{3|D|}{\sqrt{q}+1},(\frac{|D|}{\sqrt{q}+1})^2+\frac{|D|}{\sqrt{q}+1}). \]
Let $D^*\subset\F_q\setminus\{0\}$ be such that $\{\Psi_{y}\in \widehat{(\F_q,+)}\mid y\in D^*\}$ is one of the Delsarte duals. We say that $D$ is {\it self-dual} if $D^*=D$. Now we state the connection between $m$-ovoids in $\W(2r-1,p^e)$ and self-dual partial difference sets in $(\F_q,+)$ explicitly.

%===========main thm=====================

\begin{thm}\label{th main}
With notation as above, let $D$ be an $\F_{p^e}^*$-invariant subset of $\F_q^*$ such that $|D| \equiv 0\pmod{(\sqrt{q}+1)(p^e-1)}$. Then the set $\mathcal{M}=\{\langle v\rangle:\,v\in D\}$ is a $\frac{|D|}{(p^e-1)(\sqrt{q}+1)}$-ovoid in $\W(2r-1,p^e)$ if and only if $D$ is a self-dual partial difference set with parameters $(q,|D|,-\sqrt{q}+(\frac{|D|}{\sqrt{q}+1})^2+\frac{3|D|}{\sqrt{q}+1},(\frac{|D|}{\sqrt{q}+1})^2+\frac{|D|}{\sqrt{q}+1})$.
\end{thm}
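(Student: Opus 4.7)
The plan is to apply Lemma~\ref{equivdef} and convert the $m$-ovoid condition into a requirement on the values $\Psi_y(D)$ for $y \in \F_q$. Fix a nonzero $y \in V$ and write $P = \langle y\rangle$. Because $D$ is $\F_{p^e}^*$-invariant and avoids $0$, the map $v \mapsto \langle v\rangle$ is a $(p^e-1)$-to-$1$ surjection from $D$ onto $\mathcal{M}$, so
\[
|P^\perp \cap \mathcal{M}| = \frac{|\{v \in D : f(v,y) = 0\}|}{p^e-1}.
\]
I would then expand $[f(v,y)=0]$ via orthogonality of additive characters of $\F_{p^e}$. The key simplification is the identity $\Psi_{ay}(D) = \Psi_y(D)$ for every $a \in \F_{p^e}^*$: because $L(X) = \sum c_i X^{p^{ie}}$ has Frobenius exponents divisible by $e$, one has $L(ay) = aL(y)$ for $a \in \F_{p^e}$, so $f(v,ay) = af(v,y)$; substituting $v \mapsto a^{-1}v$ in $\sum_{v \in D}\psi_{\F_{p^e}}(af(v,y))$ is admissible by the invariance of $D$ and yields the claim.

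Combining these ingredients and substituting $|D| = m(p^e-1)(\sqrt{q}+1)$ where $m = |D|/((p^e-1)(\sqrt{q}+1))$ gives
\[
|P^\perp \cap \mathcal{M}| = \frac{m(\sqrt{q}+1) + \Psi_y(D)}{p^e}.
\]
Using $q = p^{2er}$ and so $p^{e(r-1)} = \sqrt{q}/p^e$, after clearing denominators the two cases in \eqref{eqn_def} become
\[
\Psi_y(D) = \begin{cases} m(p^e-1) - \sqrt{q}, & y \in D,\\ m(p^e-1), & y \in \F_q^* \setminus D.\end{cases}
\]
Thus $\mathcal{M}$ is an $m$-ovoid if and only if $\Psi_y(D)$ takes exactly these two values on $\F_q^*$, with the first attained precisely on $D$.

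Finally, I would quote the paragraph preceding the theorem: with $a = |D|/(\sqrt{q}+1) = m(p^e-1)$, the two values above are exactly the restricted eigenvalues $\alpha_1 = a$ and $\alpha_2 = a-\sqrt{q}$ of a strongly regular Cayley graph on $(\F_q,+)$ of negative Latin square type with the stated parameters, and the Delsarte dual of size $|D|$ is $\{y \in \F_q^* : \Psi_y(D) = \alpha_2\}$. Hence the $m$-ovoid condition is equivalent to $\Cay(\F_q,D)$ being strongly regular of negative Latin square type with those parameters together with $\{y : \Psi_y(D) = \alpha_2\} = D$, i.e., to $D$ being a self-dual partial difference set of the stated parameters; since $-1 \in \F_{p^e}^*$, the symmetry condition $-D = D$ needed for a PDS is automatic. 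I expect the identity $\Psi_{ay}(D) = \Psi_y(D)$ to be the main technical hurdle, as it is what makes the character-sum expansion collapse into a single eigenvalue and depends essentially on both the $\F_{p^e}$-linearity of $L$ and the $\F_{p^e}^*$-invariance of $D$; the remaining steps are parameter bookkeeping.
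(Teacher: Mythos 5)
Your proposal is correct and takes essentially the same approach as the paper: your identity $|P^{\perp}\cap \cM|=\frac{m(\sqrt{q}+1)+\Psi_y(D)}{p^e}$ is exactly the paper's $\Psi_y(D)=p^e\cdot|\langle y\rangle^{\perp}\cap\cM|-|\cM|$, obtained there by grouping $D$ into $\F_{p^e}^*$-cosets and applying orthogonality of $\psi_{\F_{p^e}}$ over $\theta\in\F_{p^e}$, and the identification of the two resulting values with the eigenvalues $\alpha_1,\alpha_2$ of the negative Latin square type graph and with self-duality is likewise the paper's argument. The only (minor) difference is that you spell out the converse direction, which the paper handles by saying the reasoning can be run backwards.
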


\begin{proof} Assume that $D$ is a self-dual partial difference set with the above parameters. Then by Lemma \ref{relation}, the graph $\Cay(\F_q,\,D)$ has two eigenvalues $\alpha_1=\frac{|D|}{\sqrt{q}+1}$ (with multiplicity $m_1=q-1-|D|$), and $\alpha_2=\frac{|D|}{\sqrt{q}+1}-\sqrt{q}$ (with multiplicity $m_2=|D|$); and furthermore since $D$ is self-dual, we have $\Psi_{y}(D)=\alpha_2$ if $y\in D^*=D$, and $\Psi_{y}(D)=\alpha_1$ if $y\in\F_q^*\setminus D$.

Write $\cM:=\{\la v_i\ra:\,1\le i\le M\}$, where $M=|D|/(p^e-1)$. Then $D=\{\theta v_i:\,1\le i\le M,\,\theta\in\F_{\q}^*\}$. For any nonzero $y\in V$, we have
\begin{align*}
\Psi_y(D)&=\sum_{i=1}^{M}\sum_{\theta\in \mathbb{F}_{p^e}^{\ast}}\Psi_y(\theta v_i)\\
&=\sum_{i=1}^{M}\sum_{\theta\in \mathbb{F}_{p^e}^{\ast}}\psi_{\mathbb{F}_{p^e}}(\theta f(v_i,y))\\
&=-M+\sum_{i=1}^{M}\sum_{\theta\in \mathbb{F}_{p^e}}\psi_{\mathbb{F}_{p^e}}(\theta f(v_i,y))\\
&=-M+p^e\cdot|\{1\le i\le M:\,f(v_i,y)=0\}|\\
&=p^e\cdot|\langle y\rangle^{\perp}\cap \mathcal{M}|-|\mathcal{M}|.
\end{align*}

Now from $\Psi_{y}(D)=\alpha_2$ if $y\in D$, we obtain $$|\langle y\rangle^{\perp}\cap \mathcal{M}|=\frac{|D|}{(p^e-1)(\sqrt{q}+1)}\cdot (p^{e(r-1)}+1) -p^{e(r-1)}\; {\rm if}\; \la y\ra\in {\mathcal M};$$ and from $\Psi_{y}(D)=\alpha_1$ if $y\in\F_q^*\setminus D$, we obtain $$|\langle y\rangle^{\perp}\cap \mathcal{M}|=\frac{|D|}{(p^e-1)(\sqrt{q}+1)}\cdot (p^{e(r-1)}+1)\; {\rm if} \; \la y\ra \not\in {\mathcal M}.$$ Therefore ${\mathcal M}$ is a $\frac{|D|}{(p^e-1)(\sqrt{q}+1)}$-ovoid in $\W(2r-1,p^e)$ by Lemma~\ref{equivdef}. The converse can be proved by simply running the above reasoning backwards. The proof is complete.
\end{proof}

We make some comments on self-duality. Let $D$ be an $\F_{p^e}^*$-invariant subset of $\F_q^*$. Assume that $\Cay(\mathbb{F}_q,D)$ is a strongly regular graph of negative Latin square type with parameters
\[
(q,|D|,-\sqrt{q}+(\frac{|D|}{\sqrt{q}+1})^2+\frac{3|D|}{\sqrt{q}+1},(\frac{|D|}{\sqrt{q}+1})^2+\frac{|D|}{\sqrt{q}+1}). \]
Define 
\begin{equation}\label{shifteddual}
D'=\{y\in \F_q\mid  \psi_{\F_q,y}(D)=\alpha_2\}.
\end{equation}
Noting that $\Psi_y(D)=\psi_{\F_q,L(y)}(D)$, we see that $D$ is self-dual if and only if $L(D)=D'$.

We will use Theorem~\ref{th main} for the purpose of constructing $m$-ovoids in $\W(2r-1,p^e)$. The first step is to equip $V$ with a concrete non-degenerate alternating form by choosing $L(X)$ carefully. Let $\gamma$ be a primitive element of $\F_q$ (recall that $q=p^{s}$ is an odd prime power, and $s=2er$), and set $\delta=\gamma^{\frac{\sqrt{q}+1}{2}}\in \mathbb{F}_q$. Then we have $\delta^{\sqrt{q}}=-\delta$.  Let $L(X)=\delta X^{\sqrt{q}}$. By Lemma \ref{lem_AL}, $f(x,y):=\tr_{q/p^e}(xL(y))$ is a non-degenerate alternating form defined on $V$. We will take $V$ equipped with this $f$ as our model for the symplectic polar space $\W(2r-1,p^e)$ in the rest of this paper.

We further assume that $q=p^{s}$ with $s=2\ell t$ and $t$ even. Take $N=p^{\ell}+1$, and let $C_0,\ldots,C_{N-1}$ be the $N^{\rm th}$ cyclotomic classes of $\F_q$. In this case, it is easy to verify that $N|\frac{q-1}{2}$. Let $J$ be a proper subset of $\Z_N$ of size $u$, and set $D_J=\cup_{i\in J}C_i$. By \cite{Brouwer1999Journal} (see the discussions in the end of Section 2), the graph $\Cay(\F_q,D_J)$ is a strongly regular graph with negative Latin square type parameters
\[
(q,\frac{u(\sqrt{q}-1)}{N}(\sqrt{q}+1),-\sqrt{q}+(\frac{u(\sqrt{q}-1)}{N})^2+\frac{3u(\sqrt{q}-1)}{N}, (\frac{u(\sqrt{q}-1)}{N})^2+\frac{u(\sqrt{q}-1)}{N}).
\]
Moreover, its eigenvalues are
\begin{equation}\label{psi}
\psi_{\mathbb{F}_q}(\gamma^iD_J)=\begin{cases}\frac{u}{N}(\sqrt{q}-1)-\sqrt{q},\,\,\, &\text{if } -i \;(\text{mod }N) \in J,\\
\frac{u}{N}(\sqrt{q}-1),\,\,\, &\text{otherwise}.	
\end{cases}
\end{equation}
It follows that $D'_J=\cup_{-i\in J}C_i$, where $D'_J$ is defined in (\ref{shifteddual}). In order to use Theorem \ref{th main} to obtain $m$-ovoids in $\W(2r-1,p^e)$, we need $D_J$ to be self-dual. To avoid possible overlap with the $m$-ovoids obtained by the field reduction method, we further assume that $r$ is odd.

\begin{lemma}\label{lem_sig_inv}
With notation as above, $D_J$ is self-dual if and only if $J$ is $\sigma$-invariant, where $\sigma:\, i\mapsto -1-i\pmod{N}$.
\end{lemma}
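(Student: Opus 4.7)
The plan is to exploit the characterization stated just before the lemma: $D_J$ is self-dual if and only if $L(D_J) = D_J'$, where $L(X) = \delta X^{\sqrt{q}}$, and by combining (\ref{psi}) with (\ref{shifteddual}) one has $D_J' = \bigcup_{j\in J} C_{-j}$. Thus I would reduce the question to matching two unions of cyclotomic classes of $\F_q$ and then read off the resulting condition on $J\subseteq \Z_N$.

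To compute $L(D_J)$ I would use $C_i = \gamma^i C_0$ and $\delta = \gamma^{(\sqrt{q}+1)/2}$ to get
\[
L(D_J) \;=\; \bigcup_{i\in J} \delta\, C_i^{\sqrt{q}} \;=\; \bigcup_{i\in J} C_{\,i\sqrt{q}\, +\, (\sqrt{q}+1)/2\ \bmod\ N},
\]
since $C_0^{\sqrt{q}}=C_0$ (as $\sqrt{q}$-th powers of $N$-th powers are $N$-th powers). The whole task then hinges on the two congruences $\sqrt{q}\equiv 1\pmod{N}$ and $(\sqrt{q}+1)/2\equiv 1\pmod{N}$. The first is immediate from $p^{\ell}\equiv -1\pmod N$ and $\sqrt{q}=(p^{\ell})^t$ with $t$ even. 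For the second I would prove the stronger divisibility $2N\mid \sqrt{q}-1$: since $p^{2\ell}-1 = (p^{\ell}-1)(p^{\ell}+1) = (p^{\ell}-1)\,N$ and $p^{\ell}-1$ is even because $p$ is odd, one gets $2N\mid p^{2\ell}-1$; and $p^{2\ell}-1\mid p^{\ell t}-1 = \sqrt{q}-1$ by evenness of $t$.

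Substituting these two congruences into the exponent would collapse $L(D_J)$ to $\bigcup_{i\in J} C_{(i+1)\bmod N}$. Since distinct cyclotomic classes are disjoint, the required equality $L(D_J) = \bigcup_{j\in J} C_{-j}$ is then equivalent to the set identity $J+1 \equiv -J\pmod N$ in $\Z_N$, that is, $j\in J$ iff $-1-j\in J$, which is exactly $\sigma(J)=J$ (one notes that $\sigma$ is an involution, so containment in one direction suffices). I do not anticipate a conceptual obstacle; the only mildly delicate point is the second congruence, where both hypotheses that $p$ is odd and that $t$ is even are essential.
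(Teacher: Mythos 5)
Your proposal is correct and follows essentially the same route as the paper: compute $L(C_i)=C_{i\sqrt{q}+(\sqrt{q}+1)/2 \bmod N}$, use $\sqrt{q}\equiv 1\pmod N$ together with $2N\mid \sqrt{q}-1$ (the paper establishes the latter via the factorization $\frac{\sqrt{q}-1}{2}=\frac{p^{\ell t}-1}{p^{2\ell}-1}\cdot\frac{p^{\ell}-1}{2}\cdot N$, which is the same fact you prove by divisibility) to conclude $L$ shifts indices by $+1$, and then match with $D_J'=\cup_{j\in J}C_{-j}$ to get $J+1=-J\pmod N$. No gaps; your treatment of the delicate congruence, using that $p$ is odd and $t$ is even, is exactly what is needed.
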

\begin{proof}
We have $L(C_i)=\gamma^{\frac{\sqrt{q}+1}{2}+\sqrt{q}i}C_0$, i.e., $L$ maps $C_i$ to $C_{\tau(i)}$, where $\tau(i):=\frac{\sqrt{q}+1}{2}+\sqrt{q}i\pmod{N}$. Note that $\sqrt{q}\equiv 1\pmod{N}$, and $\frac{\sqrt{q}-1}{2}=\frac{(p^{\ell t}-1)}{(p^{2\ell} -1)}\cdot\frac{(p^{\ell}-1)}{2}\cdot N\equiv 0\pmod{N}$ since $t$ is even by assumption. Therefore $\tau(i)=i+1\pmod{N}$. The partial difference set $D_J$ is self-dual if and only if $L(D_J)=D'_J$ which in turn is equivalent to $\{i+1\pmod{N}:\,i\in J\}=\{-i\pmod{N}:\,i\in J\}$, i.e., $-J-1=J$. The proof of the lemma is complete.
\end{proof}

\begin{lemma}\label{lem_Fpr_inv}
With notation as above, $D_J$ is $\F_{p^e}^*$-invariant if and  only if $J$ is invariant under the map $\rho:\,i\mapsto i+2d_0\pmod{N}$, where $d_0$ is an odd integer defined by
\begin{equation}\label{gcd}
	d_0:=\gcd\left(\frac{N}{2},\, \frac{\sqrt{q}-1}{p^e-1}\right).
\end{equation}
\end{lemma}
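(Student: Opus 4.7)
The plan is to translate the $\F_{p^e}^*$-multiplicative action on $D_J$ into a cyclic shift on the index set $J\subset \Z_N$, and then identify the stabilizer. First I observe that $\F_{p^e}^*$ is cyclic, generated by $g:=\gamma^{(q-1)/(p^e-1)}$, and that multiplication by $\gamma$ shifts the index of each cyclotomic class by $1$ modulo $N$ (since $\gamma\cdot C_i=C_{i+1}$). Therefore multiplication by $g$ induces the shift $i\mapsto i+m\pmod N$ on indices, where $m\equiv (q-1)/(p^e-1)\pmod N$, and $D_J$ is $\F_{p^e}^*$-invariant if and only if $J+m\equiv J\pmod N$, equivalently if and only if $J$ is invariant under the cyclic subgroup of $\Z_N$ generated by $m$.

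Next I would reduce $m$ modulo $N$. The identity $p^\ell\equiv -1\pmod N$ together with the hypothesis that $t$ is even yields $\sqrt q = p^{\ell t}\equiv 1\pmod N$, so $\sqrt q +1 \equiv 2\pmod N$. Writing $q-1=(\sqrt q-1)(\sqrt q+1)$ and setting $A:=(\sqrt q-1)/(p^e-1)=1+p^e+\cdots+p^{e(r-1)}\in\Z$, I obtain
\[
\frac{q-1}{p^e-1}=A\,(\sqrt q+1)\equiv 2A\pmod N.
\]
The subgroup of $\Z_N$ generated by $2A$ consists of all multiples of $\gcd(2A,N)$. Since $N$ is even (as $N=p^\ell+1$ with $p$ odd), comparing $2$-adic valuations gives $\gcd(2A,N)=2\gcd(A,N/2)=2d_0$. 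Hence $\F_{p^e}^*$-invariance of $D_J$ is equivalent to invariance of $J$ under $\rho: i\mapsto i+2d_0\pmod N$.

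To confirm the parity assertion, I would use the standing assumption that $r$ is odd: then $A=\sum_{i=0}^{r-1}p^{ei}$ is a sum of $r$ odd integers and is therefore odd, and since $d_0\mid A$ by definition, $d_0$ must be odd as well. The only substantive step is the congruence $(q-1)/(p^e-1)\equiv 2A\pmod N$; this is where the structural hypotheses $N=p^\ell+1$, $s=2\ell t$, and $t$ even all enter simultaneously, and after this identification the remainder is formal \textgcd bookkeeping, so I expect no genuine obstacle beyond securing that single reduction cleanly.
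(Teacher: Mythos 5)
Your proposal is correct and follows essentially the same route as the paper's proof: both reduce $\F_{p^e}^*$-invariance to invariance of $J$ under the shift by $(q-1)/(p^e-1)\bmod N$, use $\sqrt{q}+1\equiv 2\pmod N$ (from $t$ even) to rewrite this as $2A$ with $A=(\sqrt q-1)/(p^e-1)$, and then identify the relevant subgroup of $\Z_N$ via $\gcd(2A,N)=2\gcd(A,N/2)=2d_0$. Your write-up is in fact slightly more complete, since you make the ``subgroup generated by the shift'' step explicit and also verify the oddness of $d_0$ (using that $r$ is odd, so $A$ is odd), a point the paper asserts in the lemma statement but does not argue in its proof.
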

\begin{proof}
Note that $\F_{p^e}^*=\la\gamma^{(q-1)/(p^e-1)}\ra$. So $D_J$ is $\F_{p^e}^*$-invariant if and  only if $J$ is invariant under the map $i\mapsto i+\frac{q-1}{p^e-1}\pmod{N}$. We have $\sqrt{q}+1\equiv2\pmod{N}$ by the assumption that $t$ is even. It follows that $\gcd(N,\frac{q-1}{p^e-1})=\gcd(N, \frac{2(\sqrt{q}-1)}{p^e-1})=2d_0$ with $d_0$ as defined in \eqref{gcd}.The conclusion of the lemma follows.
\end{proof}

\begin{thm}\label{thm_main_const}
Let $p$ be an odd prime, and $q=p^s$ with $s=2er=2\ell t$ for some positive integers $e,\,r,\,\ell,\,t$, where $r$ is odd and $t$ is even. Set $N=p^{\ell}+1$, and let $d_0$ be defined as in \eqref{gcd}. If $d_0>1$, then there exists a $\frac{b(\sqrt{q}-1)}{d_0(p^e-1)}$-ovoid in $\W(2r-1,p^e)$ for each integer $1\le b\le d_0-1$.
\end{thm}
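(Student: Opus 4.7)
The plan is to apply Theorem~\ref{th main} with $D=D_J$ for a carefully chosen subset $J\subset\Z_N$. By Lemmas~\ref{lem_sig_inv} and \ref{lem_Fpr_inv}, $D_J$ is a self-dual partial difference set and $\F_{p^e}^*$-invariant precisely when $J$ is simultaneously invariant under the two permutations $\sigma:i\mapsto -1-i$ and $\rho:i\mapsto i+2d_0$ of $\Z_N$. So my strategy is to understand the orbits of the group $\Gamma=\langle\sigma,\rho\rangle$ acting on $\Z_N$, and then take $J$ to be a union of $\Gamma$-orbits.

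Since $d_0\mid N/2$, the element $2d_0$ has order $N/(2d_0)$ in $\Z_N$, so the $\langle\rho\rangle$-orbits are the $2d_0$ cosets of the subgroup $2d_0\,\Z_N\subset \Z_N$, naturally parametrized by $\Z_{2d_0}$. The involution $\sigma$ descends on this quotient to the map $[i]\mapsto[-1-i]$ in $\Z_{2d_0}$. A fixed coset would have to satisfy $2i\equiv -1\pmod{2d_0}$, which has no solution since $2d_0$ is even. Hence $\sigma$ pairs up the $2d_0$ cosets, and $\Gamma$ partitions $\Z_N$ into exactly $d_0$ orbits, each of uniform size $N/d_0$.

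Next, for any integer $1\le b\le d_0-1$, choose $J$ to be the union of any $b$ of these $d_0$ orbits; this is a proper nonempty subset of $\Z_N$ with $|J|=bN/d_0$ that is invariant under both $\sigma$ and $\rho$. Because $t$ is even, the construction of \cite{Brouwer1999Journal} recalled just above \eqref{psi} tells us that $\Cay(\F_q,D_J)$ is strongly regular of negative Latin square type with precisely the parameter set required in the hypothesis of Theorem~\ref{th main}. A short computation yields $|D_J|=b(q-1)/d_0$, and the divisibility $(p^e-1)(\sqrt{q}+1)\mid |D_J|$ is automatic from $d_0\mid (\sqrt{q}-1)/(p^e-1)$. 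Theorem~\ref{th main} then produces a $\frac{|D_J|}{(p^e-1)(\sqrt{q}+1)}=\frac{b(\sqrt{q}-1)}{d_0(p^e-1)}$-ovoid of $\W(2r-1,p^e)$, as desired.

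The only real point of substance is the orbit count in the second step: verifying that $\sigma$ acts without fixed points on the set of $\rho$-orbits, so that every $\Gamma$-orbit has exactly $N/d_0$ elements. This uniform orbit size is what lets us realize every admissible cardinality $|J|=bN/d_0$ and, equivalently, every target $m$-value claimed in the theorem. Everything else reduces to bookkeeping using the self-duality and invariance criteria from Lemmas~\ref{lem_sig_inv} and \ref{lem_Fpr_inv} together with the character-sum identity inside the proof of Theorem~\ref{th main}.
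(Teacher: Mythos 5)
Your proposal is correct and follows essentially the same route as the paper: reduce via Lemmas~\ref{lem_sig_inv} and \ref{lem_Fpr_inv} to finding $\langle\rho,\sigma\rangle$-invariant subsets $J\subset\Z_N$, show all orbits of $\langle\rho,\sigma\rangle$ have size $N/d_0$, take $J$ a union of $b$ orbits, and invoke Theorem~\ref{th main}. The only cosmetic difference is that you justify the uniform orbit length directly (via the fixed-point-free action of $\sigma$ on the $2d_0$ cosets of $\langle 2d_0\rangle$, using that $2i\equiv-1\pmod{2d_0}$ is insoluble), whereas the paper cites the dihedral structure of $\langle\rho,\sigma\rangle$; your version arguably spells out this key point more explicitly.
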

\begin{proof}
We continue with the above notation. By Lemma \ref{lem_sig_inv} and Lemma \ref{lem_Fpr_inv}, the set $D_J$ is $\F_{p^e}^*$-invariant and satisfies $L(D_J)=D'_J$ if and only if $J$ is invariant under the map  $\rho:\,i\mapsto i+2d_0\pmod{N}$ and $\sigma:\,i\mapsto -1-i\pmod{N}$. We claim that $\langle \rho,\sigma\rangle=\{1,\rho,\rho^2,...,\rho^{\frac{N}{2d_0}-1},\sigma,\rho\sigma, \rho^2\sigma,...,\rho^{\frac{N}{2d_0}-1}\sigma\}$ by the fact that $\langle \rho,\sigma\rangle $ is a dihedral group $D_{\frac{N}{d_0}}$, cf. \citep[2.24]{Rose1978A}. Thus each  $\la\rho,\,\sigma\ra$-orbit ${\mathcal O}$ on $\Z_N$ has equal length $\frac{N}{d_0}$, and the corresponding union $D_{\mathcal O}=\cup_{i\in {\mathcal O}}C_i$ of cyclotomic classes has size $|C_0|\cdot \frac{N}{d_0}=\frac{q-1}{d_0}$. This number is divisible by $(\sqrt{q}+1)(p^e-1)$, since $d_0$ divides $\frac{\sqrt{q}-1}{p^e-1}$ by \eqref{gcd}. Therefore, $D_{\mathcal O}$ is an $m$-ovoid in $\W(2r-1,p^e)$ with $m=\frac{\sqrt{q}-1}{d_0(p^e-1)}$. By taking union of $b$ such $D_{\mathcal O}$'s, we get $bm$-ovoids in $\W(2r-1,p^e)$ for $1\le b\le d_0-1$.
\end{proof}

%================================================
For the following discussions, we choose $r$ to be an odd prime $p_0$, and give explicit conditions that guarantee $d_0>1$, where $d_0$ is defined in \eqref{gcd}.  This excludes the possibility that the resulting $m$-ovoids from Theorem \ref{thm_main_const} come from field reduction. Recall that $s=2e p_0=2\ell t$ with $t$ even. We consider two cases.

\begin{enumerate}
\item[(A)] First consider the case $p_0\mid t$. Write $t=p_0t_0$. So $t_0$ is even and $e=\ell t_0$, and
    \[
       \frac{\sqrt{q}-1}{p^e-1}=\sum_{i=0}^{p_0-1}p^{ie}=\sum_{i=0}^{p_0-1}p^{i\ell t_0}\equiv p_0 \pmod{p^{\ell}+1}.
    \]
    Therefore, $d_0=\gcd(N/2,p_0)=\gcd(N,p_0)$. In this case, $d_0>1$ if and only if $p_0\mid (p^{\ell}+1)$, which in turn is equivalent to $d_0=p_0$. In Table \ref{table_1}, we give some examples of $m$-ovoids constructed by using Theorem \ref{thm_main_const} in this case.

\item[(B)] Next consider the case where $p_0$ does not divide $t$.  In this case, from $p_0e=\ell t$, we deduce that $p_0\mid \ell$. Write $\ell=\ell_0p_0$. Then $e=\ell_0t$. We show that $d_0=\gcd(N/2,\frac{\sqrt{q}-1}{p^e-1})$ is always greater than $1$. We have $\sqrt{q}-1=p^{\ell t}-1\equiv (-1)^t-1=0\pmod{N}$ by the fact $t$ is even, so $N/2$ divides $\sqrt{q}-1$. On the other hand, $\gcd(p^e-1,N/2)$ divides $\gcd(p^e-1,p^{2\ell}-1)=p^{\gcd(e,2\ell)}-1=p^{2\ell_0}-1$ by the fact $p_0\nmid t$. Since $p^{2\ell_0}-1<N/2=(p^{\ell_0p_0}+1)/2$, we see that $N/2$ does not divide $p^e-1$. We conclude that $d_0>1$. In Table \ref{table_2}, we give some examples of $m$-ovoids constructed by using Theorem \ref{thm_main_const} in this case.
\end{enumerate}

%====================table 1====================

\begin{table}[!htbp]
\caption{$m$-ovoids constructed from Theorem \ref{thm_main_const} in the case $r=p_0$ odd prime, $p_0\mid t$}\label{table_1}
\begin{center}
\scalebox{0.82}{
\begin{tabular}{|c|c|c|c|c|c|c|c p{7cm}|}
\hline
$p_0$ & $p$ &  $\ell$&$t$&$d_0$& $W(2p_0-1,p^e)$ &   $m$ \\
\hline
 3&$p$ odd &1& 6$k$, $k\in \mathbb{Z}^+$&3&$W(5,p^{2k})$&$\frac{b}{3}(p^{4k}+p^{2k}+1)$, $b\in\{1,2\}$ \\
\hline
5 & 3 &2  & 10$k$, $k\in \mathbb{Z}^+$& 5 &$W(9,3^{4k})$&$\frac{b(3^{20k}-1)}{5(3^{4k}-1)}$, $1\le b\le 4$\\
\hline
5&7&2&10$k$, $k\in \mathbb{Z}^+$&5&$W(9,7^{4k})$&$\frac{b(7^{20k-1})}{5(7^{4k}-1)}$, $1\le b\le 4$\\
\hline
5&13&2&10$k$, $k\in \mathbb{Z}^+$&5&$W(9,13^{4k})$&$\frac{b(13^{20k}-1)}{5(13^{4k}-1)}$, $1\le b\le 4$\\
\hline
5&17&2& 10$k$, $k\in \mathbb{Z}^+$& 5&$W(9,17^{4k})$&$\frac{b(17^{20k}-1)}{5(17^{4k}-1)}$, $1\le b\le 4$\\
\hline
5&19&1& 10$k$, $k\in \mathbb{Z}^+$& 5&$W(9,19^{2k})$&$\frac{b(19^{10k}-1)}{5(19^{2k}-1)}$, $1\le b\le 4$\\
\hline
7&3&3& 14$k$, $k\in \mathbb{Z}^+$& 7&$W(13,3^{6k})$&$\frac{b(3^{42k}-1)}{7(3^{6k}-1)}$, $1\le b\le 6$\\
\hline
7&5&3& 14$k$, $k\in \mathbb{Z}^+$&7& $W(13,5^{6k})$&$\frac{b(5^{42k}-1)}{7(5^{6k}-1)}$, $1\le b\le 6$\\
\hline
7&13&3& 14$k$, $k\in \mathbb{Z}^+$&7& $W(13,13^{6k})$&$\frac{b(13^{42k}-1)}{7(13^{6k}-1)}$, $1\le b\le 6$\\
\hline
11&7&5& 22$k$, $k\in \mathbb{Z}^+$& 11&$W(21,7^{10k})$&$\frac{b(7^{110k}-1)}{11(7^{10k}-1)}$, $1\le b\le 10$\\
\hline
11&13&5& 22$k$, $k\in \mathbb{Z}^+$& 11&$W(21,13^{10k})$&$\frac{b(13^{110k}-1)}{11(13^{10k}-1)}$, $1\le b\le 10$\\
\hline
11&17&5& 22$k$, $k\in \mathbb{Z}^+$& 11&$W(21,17^{10k})$&$\frac{b(17^{110k}-1)}{11(17^{10k}-1)}$, $1\le b\le 10$\\
\hline
11&19&5& 22$k$, $k\in \mathbb{Z}^+$& 11&$W(21,19^{10k})$&$\frac{b(19^{110k}-1)}{11(19^{10k}-1)}$, $1\le b\le10$\\
\hline
13&5&2& 26$k$, $k\in \mathbb{Z}^+$&13& $W(25,5^{4k})$&$\frac{b(5^{52k}-1)}{13(5^{4k}-1)}$, $1\le b\le 12$\\
\hline
13&7&6& 26$k$, $k\in \mathbb{Z}^+$& 13&$W(25,7^{12k})$&$\frac{b(5^{156k}-1)}{13(5^{12k}-1)}$, $1\le b\le 12$\\
\hline
\end{tabular}}
\end{center}
\end{table}

%====================table 2==============
\begin{table}[!htbp]
\caption{{$m$-ovoids constructed from Theorem \ref{thm_main_const} in the case $r=p_0$ odd prime, $p_0\nmid t$}}\label{table_2}
\begin{center}
\scalebox{0.9}{
\begin{tabular}{|c|c|c|c|c|c|c| p{7cm}|}
\hline
$p_0$ & $p$ &  $\ell$&$t$&$d_0$& $W(2p_0-1,p^e)$ &    $m$ \\
\hline
3&3&3&2$k$, $3\nmid k$&7&$W(5,3^{2k})$&$\frac{b(3^{6k}-1)}{7(3^{2k}-1)}$, $1\le b\le 6$\\
\hline
3&5&3&$2k$, $3\nmid k$&21&$W(5,5^{2k})$&$\frac{b(5^{6k}-1)}{21(5^{2k}-1)}$, $1\le b\le 20$\\
\hline
3&7&3&$2k$, $3\nmid k$&43&$W(5,7^{2k})$&$\frac{b(7^{6k}-1)}{43(7^{2k}-1)}$, $1\le b\le42$ \\
\hline
5&3&5&$2k$, $5\nmid k$&61&$W(9,3^{2k})$&$\frac{b(3^{10k}-1)}{61(3^{2k}-1)}$, $1\le b\le60$\\
\hline
5&5&5&$2k$, $5\nmid k$&521&$W(9,5^{2k})$&$\frac{b(5^{10k}-1)}{521(5^{2k}-1)}$, $1\le b\le520$\\
\hline
7&3&7&$2k$, $7\nmid k$&547&$W(13,3^{2k})$&$\frac{b(3^{14k}-1)}{547(3^{2k}-1)}$, $1\le b\le546$\\
\hline
7&5&7&$2k$, $7\nmid k$&13021&$W(13,5^{2k})$&$\frac{b(5^{14k}-1)}{13021(5^{2k}-1)}$, $1\le b\le13020$\\
\hline
11&3&11&$2k$, $11\nmid k$&44287&$W(21,3^{2k})$&$\frac{b(3^{22k}-1)}{44287(3^{2k}-1)}$, $1\le b\le44286$\\
\hline
\end{tabular}}
\end{center}
\end{table}

It was conjectured in \cite{Bamberg2017} that if an $m$-ovoid exists in $\W(2r-1,p^e)$ with $r>2$, then $m\geq cp^{e(r-2)}$ for some positive constant $c>0$. From Theorem 3.5, we know that there exist $m$-ovoids in $\W(2r-1,p^e)$ with $m=\frac{b(\sqrt{q}-1)}{d_0(p^e-1)}$ if $d_0>1$. We compute

\begin{equation*}
\frac{m}{p^{e(r-2)}}=	\frac{b(\sqrt{q}-1)}{d_0(p^e-1)p^{e(r-2)}}=\frac{b(p^{er}-1)}{d_0(p^{e(r-1)}-p^{e(r-2)})}
\end{equation*}

In the following we show that the aforementioned conjecture is false.

\begin{example}
	In the case (B), when $p=3,r=p_0=5,t=2$, we have $d_0=\gcd{(\frac{3^{5\ell_0}+1}{2},\frac{3^{10\ell_0}-1}{3^{2\ell_0}-1})}$. It follows that $\frac{3^{5\ell_0}+1}{3^{\ell_0}+1}\mid d_0$; thus $\frac{m}{p^{e(p_0-2)}}\leq\frac{b(3^{4\ell_0}+3^{3\ell_0}+3^{2\ell_0}+3^{\ell_0}+1)}{3^{6\ell_0}}$, for some fixed $b$, and $\lim_{\ell_0\rightarrow \infty}\frac{m}{p^{e(p_0-2)}}=0$.
\end{example}
\begin{example}
In the case (B), when $p=5,r=p_0=7,t=2$, we have $d_0=\gcd{(\frac{5^{7\ell_0}+1}{2},\frac{5^{14\ell_0}-1}{5^{2\ell_0}-1})}$. It follows that $\frac{5^{7\ell_0}+1}{5^{\ell_0}+1}\mid d_0$; thus $\frac{m}{p^{e(p_0-2)}}\leq \frac{b(5^{7\ell_0}-1)}{(5^{\ell_0}-1)5^{10\ell_0}}$, for some fixed $b$, and $\lim_{\ell_0\rightarrow \infty}\frac{m}{p^{e(p_0-2)}}=0$. 	
\end{example}
\begin{example}
In the case (B), when $p=5,r=p_0=11,t=2$, we have $d_0=\gcd{(\frac{5^{11\ell_0}+1}{2},\frac{5^{22\ell_0}-1}{5^{2\ell_0}-1})}$. It follows that $\frac{5^{11\ell_0}+1}{5^{\ell_0}+1}\mid d_0$; thus $\frac{m}{p^{e(p_0-2)}}\leq \frac{b(5^{11\ell_0}-1)}{(5^{\ell_0}-1)5^{18\ell_0}}$, for some fixed $b$, and $\lim_{\ell_0\rightarrow \infty}\frac{m}{p^{e(p_0-2)}}=0$. 	
\end{example}

\section{Conclusion}
In this paper, we develop a new method for constructing $m$-ovoids in finite symplectic spaces. We use some ``special" strongly regular Cayley graphs ${\rm Cay}(\F_{q}, D)$ from uniform cyclotomy in \cite{Brouwer1999Journal} and equip the ambient finite field $\F_{q}$ with a non-degenerate alternating form $f$ so that the connection set $D$ gives rise to an $m$-ovoid in the symplectic space $(\F_q, f)$. In this way we are able to obtain $m$-ovoids in high rank symplectic spaces which do not come from field reduction. We remark that there have been extensive investigations on constructions of strongly regular Cayley graphs from cyclotomy in recent years, cf. \cite{Feng2012Strongly, Momihara2014Certain,Feng2015Con,Momihara2018Strongly}, and it will be of interest to examine whether further new $m$-ovoids can arise from Theorem \ref{th main}.

We mention in passing that we only considered constructing $m$-ovoids in symplectic spaces from strongly regular graphs of negative Latin square type, but did not discuss similar constructions of $i$-tight sets in symplectic spaces from strongly regular graphs of Latin square type. The reason is as follows:  each component of a spread of $\W(2r-1,p^e)$ is a 1-tight set, thus there exist $i$-tight sets for all $i$ in $\W(2r-1,p^e)$. 

The results in this paper show that there are many more $m$-ovoids in finite symplectic spaces than previously thought. Still it remains an interesting problem to determine for which values of $m$ there exists an $m$-ovoid in $\W(2r-1,p^e)$.

\vspace{0.1in}

\noindent{\bf Acknowledgements.} The research of Tao Feng is supported the National Natural Science Foundation of China grant 11771392, and the research of Qing Xiang is supported by an NSF grant DMS 1855723.

%\section*{Reference}
\scriptsize
\setlength{\bibsep}{0.5ex}  % vertical spacing between references

%\section*{Reference}
%\scriptsize
%\setlength{\bibsep}{0.5ex}  % vertical spacing between references
   %\bibliographystyle{plain}
 %\footnotesize
%\linespread{0.5}
%\bibliography{ref}

\end{document}